\newcommand{\N}{\mathbb{N}}
\newcommand{\C}{\mathbb{C}}
\newtheorem{theorem}{Theorem}[section]
\newtheorem{definition}[theorem]{Definition}
\newtheorem{lemma}[theorem]{Lemma}
\newtheorem{remark}[theorem]{Remark}
\numberwithin{equation}{section}
\begin{document}
	\title{Chaos in convolution operators on the space of entire functions of infinitely many complex variables}
	\author{Blas M. Caraballo \thanks{The first named author is supported by CAPES and CNPq } and Vin\'icius V. F\'avaro\thanks{The second named author is supported by FAPEMIG Grant APQ-03181-16; and CNPq Grant 310500/2017-6.}}
	
	\date{}
	\maketitle
	
	\vspace{-0.2 cm}
	
	\begin{center}
		\textit{Dedicated to the memory of Professor Jorge Mujica (1946-2017)}
	\end{center}
	
	\vspace{0.2 cm}
	
	\begin{abstract}
		A classical result of Godefroy and Shapiro states that every nontrivial convolution operator on the space $\mathcal{H}(\mathbb{C}^n)$ of entire functions of several complex variables is hypercyclic. In sharp contrast with this result F\'avaro and Mujica show that no translation operator on the space $\mathcal{H}(\mathbb{C}^\mathbb{N})$ of entire functions of infinitely many complex variables is hypercyclic. In this work we study the linear dynamics of convolution operators on $\mathcal{H}(\mathbb{C}^\mathbb{N})$. First we show that no convolution operator on $\mathcal{H}(\mathbb{C}^\mathbb{N})$ is neither cyclic nor $n$-supercyclic for any positive integer $n$. After we study the notion of Li--Yorke chaos in non-metrizable topological vector spaces and we show that  every nontrivial convolution operator on $\mathcal{H}(\mathbb{C}^\mathbb{N})$ is Li--Yorke chaotic.
		
		%{\small \bigskip\noindent\textbf{MSC2010:}  46E50, 46B28, 46A32}
		{\small \bigskip\noindent\textbf{MSC2010:}  47A16, 47B38, 32A15}
		
		{\small \medskip\noindent\textbf{Keywords:}   $n$-supercyclicity, cyclicity, Li-Yorke chaos, convolution operators, holomorphic functions of infinitely many complex variables.}
	\end{abstract}
	
	\section{Introduction}
	Let $V$ be a subset of a Hausdorff topological complex vector space $E$ and let $T\colon E\to E$ be a continuous linear operator (from now on we just write operator). The \emph{orbit of $V$ under $T$}, denoted by orb$_T(V)$, is the subset of $E$ given by 
	\[\textnormal{orb}_T(V)=\bigcup_{k=0}^\infty T^k(V).\]
	If $V=\{x\}$ is a singleton and orb$_T(V)=\{T^kx : k\in\N_0\}$ is dense in $E$, where $\N_0=\{0,1,2,3,\ldots\}$, then $T$
	is said to be \emph{hypercyclic} and $x$ a \emph{hypercyclic vector} for $T$. If the linear space generated by orb$_T(V)$ is dense in $E$, then $T$
	is said to be \emph{cyclic}	and $x$ a \emph{cyclic vector} for $T$. If $V=\textnormal{span}\{x\}$ and orb$_T(V)=\C\cdot\{T^kx : k\in\N_0\}$ is dense in $E$, then $T$ is said to be \emph{supercyclic} and $x$ a \emph{supercyclic vector} for $T$. Finally, if $V$ is a vector subspace of dimension $n$ and orb$_T(V)$ is dense in $E$, then $T$
	is said to be \emph{$n$-supercyclic} and $V$ a \emph{supercyclic subspace} for $T$.
	
	Hypercyclicity is the most important concept in linear dynamics and it has received considerable attention in the last 25 years.  References \cite{bay, grosse2011linear} provide deep and detailed surveys of the theory. The notion of chaos in linear dynamics was introduced by Godefroy and Shapiro \cite{godefroy} in 1991. They adopted the Devaney's definition of chaos. Recall that an operator on a Fréchet space is \emph{chaotic} if it is hypercyclic and it has a dense set of periodic points. 
	
	There are several important notions of chaos and some authors have started to study this notions in the context of linear dynamics. In addition to the notions defined above we mention the first mathematical definition of chaos given in 1975 by Li and Yorke in  \cite{li1975period}, which is currently known as Li--Yorke chaos. This classical notion of Li-Yorke chaos was introduced for maps defined on metric spaces as follow: Given a metric space $(M,d)$ and a continuous map $f\colon M\rightarrow M$, we recall that a pair $(x,y)\in M\times M$ is called a \emph{Li-Yorke pair} for $f$ if
	$$
	\liminf_{n\rightarrow\infty} d(f^n(x),f^n(y))=0 \textrm{ and } \limsup_{n\rightarrow\infty}  d(f^n(x),f^n(y))>0.$$
	A \emph{scrambled set} for $f$ is a subset $S$ of $M$ such that $(x, y)$ is a Li–Yorke pair for $f$ whenever $x$ and $y$ are distinct points in $S$. The map $f$ is said to be \emph{Li–Yorke chaotic} if there exists an uncountable scrambled set for $f$. By \cite[Theorem 9]{Li-Yorke} hypercyclicity implies chaos in the sense of Li--Yorke. 
	
	In this paper we are mainly interested in the linear dynamics of convolution operators on spaces of entire functions of infinitely many complex variables. We remark that several results on linear dynamics of operators on spaces of entire functions of infinitely many complex variables have appeared in the last few decades. See for instance \cite{aronbes, bay2, BBFJ, bes2012, CDSjmaa, vinicius2016hypercyclic, FMNach, GS, goswinBAMS, MPS, peterssonjmaa}.
	
	A classical result due to Godefroy and Shapiro \cite{godefroy}  states that every nontrivial convolution o\-pe\-ra\-tor on $\mathcal{H}(\C^n)$ is hypercyclic. Moreover,  A. Bonilla and K.-G. Grosse-Erdmann \cite{bonilla2006theorem} showed that these convolution operators are even frequently hypercyclic, which is a stronger notion than hypercyclicity. In sharp contrast with these results, Fávaro and Mujica \cite{vinicius2016hypercyclic} proved that no convolution operator on $\mathcal{H}(\C^\N)$  can be hypercyclic. At first sight this result may look surprising, since it is well known that every $f\in\mathcal{H}(\mathbb{C}^\mathbb{N})$ depends only of  finitely many variables (see \cite[p. 162]{dineen2012complex}). Based on these facts, the following question arises:
	\begin{center}
		Do the convolution operators on $\mathcal{H}(\mathbb{C}^\mathbb{N})$ satisfy some notion of the linear dynamics weaker than hypercyclicity?
		%Do the convolution operators on $\mathcal{H}(\mathbb{C}^\mathbb{N})$ satisfy some weaker notion of chaos than hypercyclicity?
	\end{center}
	
	Note that the notions of $1$-supercyclicity and supercyclicity are equivalent and that the following diagram holds:
	\[\xymatrix@=7mm{\textnormal{hypercyclicity} \ar@2{->}[r] & \textnormal{supercyclicity}\ar@2{->}[r]\ar@2{->}[d]&\textnormal{cyclicity}\\
		&n\textnormal{-supercyclicity}\ar@2{}[ru]}\]
	
	An $n$-supercyclic operator need not be cyclic, for $n=2,3,\ldots$ (for an example in infinite dimension see \cite{bourdon2004}). Hilden and Wallen \cite{hilden1974some} proved that no operator on $\C^n$ can be supercyclic $(n=2,3\ldots)$. So, $n$-supercyclicity does not imply supercyclicity, in general. For properties and results about supercyclicity and $n$-supercyclicity we refer to \cite{bourdon2004,feldman2002n,herrero1991limits,hilden1974some}. 
	
	In sharp contrast with the aforementioned result of Godefroy and Shapiro we will show that no convolution operator on $\mathcal{H}(\C^\N)$  can be neither cyclic nor $n$-supercyclic for any positive integer $n$ (Theorem \ref{nsuper}). 
	So we may rewrite the last question in the following way:
	
	\begin{center}
		Are the convolution operators on $\mathcal{H}(\mathbb{C}^\mathbb{N})$ at least Li--Yorke chaotic?
	\end{center}
	
	Since $\mathcal{H}(\C^\N)$ is a non-metrizable complete locally convex space, the classical notion of Li--Yorke chaos does not make sense in this context. Recently T. Arai \cite{arai2017devaney} introduced the notion of Li-Yorke chaos for an action of a group on an uniform space. Since every topological vector space is an uniform space, we will adopt the Arai's definition of Li-Yorke chaos. Using this definition we will prove that the last question has positive answer, i.e., every convolution operator on $\mathcal{H}(\mathbb{C}^\mathbb{N})$  is Li--Yorke chaotic (Theorem \ref{TLY}). It is worth to mention that the criteria that appear in the literature to prove that an operator satisfies (or not) some kind of linear chaos are in general for operators defined on $F$-spaces. Since $\mathcal{H}(\C^\N)$ is not a metric space we will use no criterion to prove the first result (Theorem \ref{nsuper}). However, to show the second result (Theorem \ref{TLY}) we will adapt a criterion obtained by Bernardes \textit{et al} \cite{Li-Yorke} for operators on Fréchet spaces to operators on Hausdorff topological vector spaces. This criterion is the key of the proof.
	
	%Since $\mathcal{H}(\C^\N)$ is not a metric space we will use no criterion to prove the first result and we will adapt a criteria obtained by Bernardes \textit{et al} \cite{Li-Yorke} for operators on Fréchet spaces to operators on Hausdorff locally convex spaces.
	
	For our purpose it is enough to present the definition of Li--Yorke chaos for an operator $T$ on a Hausdorff topological vector space $E$ as follow: A pair $(x,y)\in E\times E$ is said to be \emph{asymptotic} for $T$ if for any neighborhood of zero $U$, there exists $k\in\N$ such that $T^n(x-y)\in U$ for every $n\geq k$, that is, if $T^n(x-y)\to0.$
	A pair $(x,y)\in E\times E$ is said to be \emph{proximal} for $T$ if for any neighborhood of zero  $U$, there exists $n\in\N$ such that $T^n(x-y)\in U$, that is, if the sequence $\{T^n(x-y)\}$ has a subsequence converging to zero.
	
	A pair $(x,y)\in E\times E$ is said to be a \emph{Li--Yorke pair} for $T$ if it is proximal, but it is not asymptotic. In other words, $(x,y)$ is a Li--Yorke pair for $T$ if and only if the sequence $\{T^n(x-y)\}$ does not converge to zero, but it has a subsequence converging to zero.
	
	A \emph{scrambled set} for $T$ is a subset $S$ of $E$ such that $(x,y)$ is a Li--Yorke pair for $T$ whenever $x$ and $y$ are distinct points in $S$. Finally, we say that  $T$ is \emph{Li--Yorke chaotic} if there exists an uncountable scrambled set for $T$. 
	
	It is easy to check that if $E$ is metrizable and we consider a translation-invariant metric (this metric exists by definition of metrizability), then both definitions of Li--Yorke chaos coincide.
	
	\section{Preliminaries}
	
	Given the topological product $\C^{\N}=\prod_{n=1}^{\infty}\C$, we consider the complex vector space of all entire functions $f\colon\C^{\N}\to\C$, which is denoted by $\mathcal{H}(\C^\N)$. It is well known that there are only two usual locally convex topologies on $\mathcal{H}(\C^\N)$: the compact open topology $\tau_0$ and its bornological associated topology $\tau_\delta$ (see \cite{Barroso1971,dineen2012complex}). It is also known that, with both topologies, $\mathcal{H}(\C^\N)$ is separable. For details and properties of these topologies we refer to \cite{ansemil1979topological,Barroso1971,barroso1979some}.
	
	For each $n\in\N$ we consider the canonical inclusion $J_n\colon\C^n\to\C^\N$, the canonical projection $\pi_n\colon\C^\N\to\C^n$ and the corresponding linear applications 
	\[J_n^*\colon f\in\mathcal{H}(\C^\N)\to f\circ J_n\in\mathcal{H}(\C^n), \ \hspace{1cm} \ \pi_n^*\colon f_n\in\mathcal{H}(\C^n)\to f_n\circ \pi_n\in\mathcal{H}(\C^\N).\]
	Since $\pi_n\circ J_n=Id_{\C^n}$ it follows that 
	\begin{equation}\label{complemented}
	J_n^*\circ\pi_n^*=Id_{\mathcal{H}(\C^n)}, \textrm{ for each } n\in\N.
	\end{equation} 
	So $\mathcal{H}(\C^n)$ can be seen as the vector subspace of $\mathcal{H}(\C^\N)$ of all  entire functions on $\C^\N$ that depend only of the $n$ first variables, through the injective application $\pi_n^*$, for each $n\in\N$. It is easy to check that 
	\begin{equation}
	\pi_1^*(\mathcal{H}(\C))\subset\pi_2^*(\mathcal{H}(\C^2))\subset\cdots\subset\pi_n^*(\mathcal{H}(\C^n))\subset\cdots\subset\mathcal{H}(\C^\N).\label{inclusao}
	\end{equation}
	
	By \cite[p. 162]{dineen2012complex} or \cite[Corolário 38]{Barroso1971}
	\begin{equation}\label{barro}
	\mathcal{H}(\C^\N)=\bigcup_{n=1}^\infty\{f_n\circ\pi_n\colon f_n\in\mathcal{H}(\C^n)\}=\bigcup_{n=1}^\infty\pi_n^*(\mathcal{H}(\C^n)).
	\end{equation}
	Also, by \cite[Proposition 1.3]{ansemil1979topological} the topology $\tau_\delta$, which was independently introduced by Nachbin \cite{nachbin1970espaces} and Couré \cite{coeure1970fonctions} coincides with the inductive limit topology of the Fréchet spaces $\mathcal{H}(\C^n)$, $n\in\N$, that is, 
	\[(\mathcal{H}(\C^\N),\tau_\delta)=\textnormal{ind}_{n\in\N}\mathcal{H}(\C^n),\]
	where $\mathcal{H}(\C^n)$ is endowed with its usual topology, the compact open topology. More precisely, $\tau_\delta$ is the strongest locally convex topology on $\mathcal{H}(\C^\N)$, which becomes the applications $\pi_n^*$ continuous. If $\tau$ represents any of the topologies $\tau_0$, $\tau_\delta$ on $\mathcal{H}(\C^\N)$ then the linear operators
	\[J_n^*\colon f\in(\mathcal{H}(\C^\N),\tau)\to f\circ J_n\in\mathcal{H}(\C^n) \ \hspace{1cm} \ \pi_n^*\colon f_n\in\mathcal{H}(\C^n)\to f_n\circ \pi_n\in(\mathcal{H}(\C^\N),\tau)\]
	are continuous and it follows from \eqref{complemented} that $\mathcal{H}(\C^n)$  is topologically isomorphic to a complemented subspace of $(\mathcal{H}(\C^\N),\tau)$. In particular, $\pi_n^*(\mathcal{H}(\C^n))$ is a closed proper subspace of $(\mathcal{H}(\C^\N),\tau)$. For background information on these topologies we refer the reader to the book of Dineen  \cite{dineen2012complex}.
	
	Finally we recall that the \emph{translation operator by $\xi \in \C^\N$}, $$\tau_{\xi}\colon \mathcal{H}(\C^\N) \rightarrow \mathcal{H}(\C^\N) $$ is given by $(\tau_{\xi}f)(x) = f(x-\xi)$ for every $x \in \C^\N$. Analogously we define translation operators on $\mathcal{H}(\C^n)$ for each $n\in\N.$
	
	\begin{remark}\label{inclu}\rm
		It is interesting to note that, if $\xi\in\C^\N$ is such that $\pi_n(\xi)=0$, then the translation operator $\tau_\xi$ on $\mathcal{H}(\C^\N)$ coincides with the identity operator on $\pi_n^*(\mathcal{H}(\C^n))$, that is, $\tau_\xi|_{\pi_n^*(\mathcal{H}(\C^n))}=Id$. Hence ${\pi_n^*(\mathcal{H}(\C^n))}$ is a closed proper subspace of $\mathcal{H}(\C^\N)$ and $\tau_\xi$-invariant.   
	\end{remark}
	
	\subsection{Convolution operators on $\mathcal{H}(\C^\N)$}
	
	In this section we prove some technical results about convolution operators that we need to show the main results of this work.
	
	\begin{definition}\rm
		A \textit{convolution operator} on $\mathcal{H}(\C^\N)$ 
		is a continuous linear mapping 
		$$ L\colon \mathcal{H}(\C^\N) \rightarrow \mathcal{H}(\C^\N) $$ 
		such that $L(\tau_{\xi}f) = \tau_{\xi}(Lf)$ for every $f \in \mathcal{H}(\C^\N)$ and $\xi \in \C^\N$. Analogously we define convolution operators on $\mathcal{H}(\C^n)$ for each $n\in\N.$
	\end{definition}
	
	%	In this section we will write the orbit of a function $f\in\mathcal{H}(\C^\N)$ under a convolution operator of different two way. The proof of the following lemma is similar to the proof of \cite[Theorem 4.1(ii)]{vinicius2016hypercyclic}.
	\begin{lemma}\label{Lm1}
		Let $f\in\mathcal{H}(\C^\N)$, $k\in\N$ and $T$ be a linear operator on $\mathcal{H}(\C^\N)$. Then there is $N\in\N$ such that  $T^if=\pi_{N}^*((T^if)\circ J_N)$ for every $i=0,\ldots,k$. Moreover, if $f=f_n\circ\pi_n$ with $f_n\in\mathcal{H}(\C^n)$ for some $n\in\N$, and $L$ is a linear operator on $\mathcal{H}(\C^\N)$ that commutes with all the translation operators (in particular, if $L$ is a convolution operator), then
		\[L^kf=\pi_{n}^*((L^kf)\circ J_n)=\pi_n^*\circ J_{n}^*(L^kf)\] 
		for every $k\in\N_0$.
	\end{lemma}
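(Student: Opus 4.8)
The plan is to prove the two assertions separately, using as the essential input the structural fact \eqref{barro} that every entire function on $\C^\N$ factors through some finite projection, together with the commutation relation \eqref{complemented}.

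For the first assertion, fix $f\in\mathcal{H}(\C^\N)$ and $k\in\N$. Applying \eqref{barro} to each of the finitely many functions $f, Tf, T^2f,\ldots,T^kf$, I get indices $N_0,N_1,\ldots,N_k\in\N$ and functions $g_i\in\mathcal{H}(\C^{N_i})$ with $T^if=\pi_{N_i}^*(g_i)$ for $i=0,\ldots,k$. Put $N=\max\{N_0,\ldots,N_k\}$. By the inclusion chain \eqref{inclusao}, $\pi_{N_i}^*(\mathcal{H}(\C^{N_i}))\subseteq\pi_N^*(\mathcal{H}(\C^N))$, so each $T^if$ lies in $\pi_N^*(\mathcal{H}(\C^N))$; say $T^if=\pi_N^*(h_i)$ with $h_i\in\mathcal{H}(\C^N)$. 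Composing with $J_N^*$ and using $J_N^*\circ\pi_N^*=Id_{\mathcal{H}(\C^N)}$ from \eqref{complemented} gives $h_i=(T^if)\circ J_N$, hence $T^if=\pi_N^*((T^if)\circ J_N)=\pi_N^*\circ J_N^*(T^if)$, which is exactly the claim for $i=0,\ldots,k$.

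For the second assertion, suppose $f=f_n\circ\pi_n=\pi_n^*(f_n)$ with $f_n\in\mathcal{H}(\C^n)$, and let $L$ commute with all translations $\tau_\xi$. It suffices to show $L^kf\in\pi_n^*(\mathcal{H}(\C^n))$ for every $k\in\N_0$; once that is known, writing $L^kf=\pi_n^*(u)$ and applying $J_n^*$ together with \eqref{complemented} yields $u=(L^kf)\circ J_n$, so $L^kf=\pi_n^*((L^kf)\circ J_n)=\pi_n^*\circ J_n^*(L^kf)$ as desired. The case $k=0$ is the hypothesis, so by induction it is enough to prove that $L$ maps $\pi_n^*(\mathcal{H}(\C^n))$ into itself. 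Here I use Remark \ref{inclu}: choosing $\xi\in\C^\N$ with $\pi_n(\xi)=0$ but $\xi\neq0$, the operator $\tau_\xi$ restricts to the identity on the $\tau_\xi$-invariant subspace $\pi_n^*(\mathcal{H}(\C^n))$. Now take any $g\in\pi_n^*(\mathcal{H}(\C^n))$. For every such $\xi$ we have $\tau_\xi(Lg)=L(\tau_\xi g)=Lg$, so $Lg$ is fixed by every translation in the directions orthogonal to the first $n$ coordinates; this forces $Lg$ to depend only on the first $n$ variables, i.e. $Lg\in\pi_n^*(\mathcal{H}(\C^n))$.

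The one point that needs a careful argument — and which I expect to be the main obstacle — is the last implication: that an entire function on $\C^\N$ invariant under all translations $\tau_\xi$ with $\pi_n(\xi)=0$ must lie in $\pi_n^*(\mathcal{H}(\C^n))$. By \eqref{barro}, $Lg=\pi_m^*(h)$ for some $m\geq n$ and $h\in\mathcal{H}(\C^m)$; writing coordinates as $(z',z'')\in\C^n\times\C^{m-n}$, the invariance says $h(z',z''-w)=h(z',z'')$ for all $w\in\C^{m-n}$, and since $h$ is entire (in particular continuous, and determined by its values on any open set by the identity principle applied variable-by-variable, or simply by differentiating in the $z''$ variables and using that the partials vanish), $h$ does not depend on $z''$. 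Hence $h=h_0\circ\pi_n^{m}$ for some $h_0\in\mathcal{H}(\C^n)$, where $\pi_n^m\colon\C^m\to\C^n$ is the projection, and therefore $Lg=\pi_m^*(h)=\pi_n^*(h_0)\in\pi_n^*(\mathcal{H}(\C^n))$. This completes the induction and the proof.
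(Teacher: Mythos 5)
Your proof is correct and rests on the same ingredients as the paper's own argument: the decomposition \eqref{barro} together with the observation that commutation with translations makes $L^kf$ invariant under every $\tau_\xi$ with $\pi_n(\xi)=0$, which forces $L^kf$ to depend only on the first $n$ variables. The differences are purely organizational --- you treat the first assertion algebraically via \eqref{complemented} and \eqref{inclusao}, and you prove the second by showing $L$ leaves $\pi_n^*(\mathcal{H}(\C^n))$ invariant and inducting on $k$, whereas the paper argues pointwise by choosing, for each $x$, the translation $\xi=(0,\ldots,0,x_{n+1},x_{n+2},\ldots)$ --- so this is essentially the same approach.
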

	\begin{proof}
		By \eqref{barro} we may write
		%	It is clear that we may write 
		$$f=f_{n_0}\circ\pi_{n_0}, Tf=f_{n_1}\circ\pi_{n_1},\ldots, T^kf=f_{n_k}\circ\pi_{n_k},$$ with $f_{n_i}\in\mathcal{H}(\C^{n_i})$ for every $i=0,\cdots,k$. Let $N=\max\{n_i: i=0,\ldots, k\}$ and $\xi\in\C^\N$ be such that $\pi_N(\xi)=0$. Then $\pi_{n_i}(\xi)=0$ for every $i=0,\cdots,k$ and
		\[\tau_\xi(T^{i}f)(x)=(T^{i}f)(x-\xi)=f_{n_i}\circ\pi_{n_i}(x-\xi)=f_{n_i}(\pi_{n_i}(x)-\pi_{n_i}(\xi))=f_{n_i}(\pi_{n_i}(x))=(T^{i}f)(x)\]
		for every $x\in\C^\N$ and every $i=0,\cdots,k$. On the other hand, given any $x=(x_j)\in\C^\N$, if we take $\xi=(0,\ldots,0,x_{N+1},x_{N+2},\ldots)\in\C^\N$, then
		\begin{align*}
		(T^if)(x)&=\tau_\xi(T^if)(x)=(T^if)(x-\xi)=(T^if)(x_1,\ldots,x_N,0,0,\ldots)\\
		&=(T^if)(J_N\circ\pi_N(x))=(T^if)\circ J_N\circ\pi_N(x).
		\end{align*}
		Thus $T^if=\pi_N^*((T^if)\circ J_N)$ for every $i=0,\ldots,k$.
		
		Now, suppose that $f=f_n\circ\pi_n$ and $L$ is a linear operator on $\mathcal{H}(\C^\N)$ that commutes with all the translation operators. Choosing $\xi\in\C^\N$ such that $\pi_n(\xi)=0$ we get
		\begin{align*}
		\tau_\xi(f)=f, \hspace{0.3cm} \tau_\xi(Lf)=L(\tau_\xi f)=Lf, \dots,
		\tau_\xi(L^kf)=L(\tau_\xi(L^{k-1}f))=L^kf,
		\end{align*}
		for every $k\in\N_0$. Following the same lines of the first part of the proof we obtain 
		\[L^kf=\pi_{n}^*((T^kf)\circ J_n),\] 
		for every $k\in\N_0$.
	\end{proof}
	%The lemma we just prove
	This lemma tells us that the operator $\pi_n^*\circ J_{n}^*$ acts as the identity on $\textnormal{Orb}_L(f)$, whenever $f=f_n\circ\pi_n\in\mathcal{H}(\C^\N)$ and $L$ is a convolution operator.	
	
	If $f=f_n\circ\pi_n\in\mathcal{H}(\C^\N)$ with $f_n\in\mathcal{H}(\C^n)$, and $\xi\in\C^\N$ is not difficult to verify that $\tau_\xi f=(\tau_{\pi_n(\xi)}f_n)\circ\pi_n$. In this sense the following question is quite natural:
	
	\medskip	
	
	\textit{
		Does every convolution operator $L$ on $(\mathcal{H}(\C^\N),\tau)$ satisfy $Lf=L_nf_n\circ\pi_n$, where $L_n$ is a convolution operator on $\mathcal{H}(\C^n)$?
	}
	
	\medskip
	
	The following lemma gives a positive answer to this question.
	
	\begin{lemma}\label{GJD}
		Let $L$ be a convolution operator on $(\mathcal{H}(\C^\N),\tau)$.
		\begin{enumerate}
			\item [(a)] The operator $L_n:=J_n^*\circ
			L\circ\pi_n^*\colon\mathcal{H}(\C^n)\to \mathcal{H}(\C^n)$, $n\in\N$, is a convolution operator. We say that $L_n$ is \emph{the convolution operator on $\mathcal{H}(\C^n)$ associated to $L$}.  %called \emph{convolution operator on $\mathcal{H}(\C^n)$ associated to $L$}.
			\item[(b)] \[L(f_n\circ\pi_n)=(L_nf_n)\circ\pi_n, \ \ \textnormal{for every} \ \ f_n\in\mathcal{H}(\C^n) \ \textnormal{and} \ n\in\N.\]
			\item[(c)] $L$ is a scalar multiple of the identity on $\mathcal{H}(\C^\N)$ if and only if $L_n$ is a scalar multiple of the identity on $\mathcal{H}(\C^n)$, for every $n\in\N$.
		\end{enumerate}
	\end{lemma}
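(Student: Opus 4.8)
The plan is to prove the three parts in order, the engine being two ``intertwining'' identities between translations at the finite and the infinite level. Fix $n\in\N$ and, for $\eta\in\C^n$, set $\xi:=J_n(\eta)\in\C^\N$, so that $\pi_n(\xi)=\eta$. A direct computation of the same kind as in the proof of Lemma \ref{Lm1} (using $\pi_n\circ J_n=Id_{\C^n}$ and the linearity of $J_n$) shows that
\[\pi_n^*(\tau_\eta f_n)=\tau_\xi(\pi_n^* f_n)\ \text{ for all }f_n\in\mathcal{H}(\C^n),\]
\[J_n^*(\tau_\xi g)=\tau_\eta(J_n^* g)\ \text{ for all }g\in\mathcal{H}(\C^\N).\]
These two identities are essentially the only computational inputs needed.

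For (a): $L_n=J_n^*\circ L\circ\pi_n^*$ is linear, and it is continuous because $\pi_n^*$ and $J_n^*$ are continuous for the topology $\tau$ (as recorded in the Preliminaries) and $L$ is continuous by hypothesis. Given $\eta\in\C^n$, put $\xi=J_n(\eta)$ and chain the two identities above with the commutation of $L$ and $\tau_\xi$:
\[L_n(\tau_\eta f_n)=J_n^*L\pi_n^*(\tau_\eta f_n)=J_n^*L\tau_\xi(\pi_n^* f_n)=J_n^*\tau_\xi L(\pi_n^* f_n)=\tau_\eta J_n^*L\pi_n^* f_n=\tau_\eta(L_n f_n).\]
Hence $L_n$ commutes with every translation operator on $\mathcal{H}(\C^n)$ and is therefore a convolution operator.

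For (b): this is immediate from Lemma \ref{Lm1}. Since $f_n\circ\pi_n=\pi_n^* f_n$ has the form required there and $L$ commutes with all translations, the case $k=1$ of that lemma gives $L(f_n\circ\pi_n)=\pi_n^*\circ J_n^*\bigl(L(f_n\circ\pi_n)\bigr)=\pi_n^*\bigl(J_n^*L\pi_n^* f_n\bigr)=\pi_n^*(L_n f_n)=(L_n f_n)\circ\pi_n$.

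For (c): if $L=c\,Id$ on $\mathcal{H}(\C^\N)$, then $L_n=J_n^*(c\,Id)\pi_n^*=c\,(J_n^*\circ\pi_n^*)=c\,Id_{\mathcal{H}(\C^n)}$ by \eqref{complemented}. Conversely, suppose $L_n=c_n\,Id_{\mathcal{H}(\C^n)}$ for every $n$. Applying (b) to the constant function $1$ (viewed in $\mathcal{H}(\C^n)$, so that $1\circ\pi_n$ is the constant function $1$ on $\C^\N$) gives $L(1)=c_n\cdot 1$ for every $n$; since $L(1)$ is one fixed element of $\mathcal{H}(\C^\N)$, all the $c_n$ coincide with a single scalar $c$. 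Finally, by \eqref{barro} every $f\in\mathcal{H}(\C^\N)$ can be written $f=f_n\circ\pi_n$ for some $n$ and some $f_n\in\mathcal{H}(\C^n)$, and then (b) yields $Lf=(L_n f_n)\circ\pi_n=c\,(f_n\circ\pi_n)=c\,f$; thus $L=c\,Id$. I do not expect a genuine obstacle here: (a) and (b) are bookkeeping around Lemma \ref{Lm1} and the intertwining identities, and the only point deserving a moment's care is in (c), namely that the scalars $c_n$ coming from different levels must agree, which is forced by the overlapping chain of subspaces $\pi_n^*(\mathcal{H}(\C^n))$ in \eqref{inclusao}.
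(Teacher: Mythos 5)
Your proof is correct and follows essentially the same route as the paper: part (a) via the intertwining of translations with $\pi_n^*$ and $J_n^*$ (the paper's computation with $\tau_{J_n(a)}f=(\tau_a f_n)\circ\pi_n$), part (b) as a direct application of Lemma \ref{Lm1} together with $L_nf_n=(Lf)\circ J_n$, and part (c) by exploiting that the subspaces $\pi_n^*(\mathcal{H}(\C^n))$ overlap so the scalars $\lambda_n$ must coincide (your use of the constant function $1$ is just a convenient special case of the paper's choice of a nonzero $g$ in nested subspaces).
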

	\begin{proof}
		Let $n\in\N$, $f_n\in\mathcal{H}(\C^n)$ and $ f:=f_n\circ\pi_n\in\mathcal{H}(\C^\N)$.
		\begin{enumerate}
			\item [(a)]  Note that 
			\begin{equation}\label{GJ}
			L_nf_n=J_n^*\circ L\circ\pi_n^*(f_n)=J_n^*\circ L(f)=(Lf)\circ J_n.
			\end{equation}
			Let $a\in\C^n$. We want to show that $\tau_a\circ L_n=L_n\circ\tau_a$. Applying \eqref{GJ} we have
			\begin{align*}
			[\tau_a\circ L_n](f_n)(z)&=\tau_a(L_nf_n)(z)=(L_nf_n)(z-a)=(Lf)\circ J_n(z-a)=(Lf)(J_n(z)-J_n(a))\\
			&=[\tau_{J_n(a)}(Lf)](J_n(z)),
			\end{align*}
			for every $z\in\C^n$ and so $[\tau_a\circ L_n](f_n)=[\tau_{J_n(a)}(Lf)]\circ J_n$. Using the fact that $L$ is a convolution operator we get
			\begin{align*}
			[\tau_a\circ L_n](f_n)&=[L(\tau_{J_n(a)}f)]\circ J_n=[L((\tau_af_n)\circ \pi_n)]\circ J_n=[L\circ \pi_n^*(\tau_af_n)]\circ J_n\\&=J_n^*\circ L\circ \pi_n^*(\tau_af_n)
			=[L_n\circ\tau_a](f_n).
			\end{align*}
			
			\item [(b)] Applying Lemma \ref{Lm1} to the entire function $f_n\circ\pi_n$ and using \eqref{GJ} we get
			\begin{equation}\label{GJchave}
			L(f_n\circ\pi_n)=\pi_n^*((L(f_n\circ\pi_n)) \circ J_n)=\pi_n^*(L_nf_n)=(L_nf_n)\circ\pi_n.
			\end{equation}
			\item[(c)] Let $\lambda\in\C$ be such that $Lg=\lambda g$ for every $g\in\mathcal{H}(\C^\N)$. Then 
			\[(L_nf_n)\circ\pi_n=Lf=(\lambda f_n)\circ\pi_n.\]
			Since $\pi_n^*$ is injective, it follows that $L_nf_n=\lambda f_n$. Therefore $L_n$ is a scalar multiple of the identity. 
			
			Conversely, suppose that for each $n\in\N$ there exists $\lambda_n\in\C$ such that $L_nf_n=\lambda_nf_n$ for every $f_n\in\mathcal{H}(\C^n)$. It is not difficult to verify that, if $g\in\pi_n^*(\mathcal{H}(\C^n))$, then $Lg=\lambda_ng$. Note that to prove the assertion it suffices to show that $\lambda_n=\lambda_m$ for any $n,m\in\N$. So, let $n,m\in\N$ with $n\leq m$. By (\ref{inclusao}) we may choose $g\in\mathcal{H}(\C^\N)$ such that $g\neq0$ and $g\in\pi_n^*(\mathcal{H}(\C^n))\subset\pi_m^*(\mathcal{H}(\C^m))$. Thus $\lambda_ng=Lg=\lambda_mg$ and since $g\neq0$ it follows that $\lambda_n=\lambda_m$.		\end{enumerate}
	\end{proof}
	
	Below we list some remarks about the previous lemma.	
	
	\begin{remark} 
		\begin{enumerate}
			\item[$(1)$] For $\xi\in\C^\N$, the convolution operator  $(\tau_\xi)_n=\tau_{\pi_n(\xi)}$ is a concrete example of convolution operator which is associated to the translation $\tau_{\xi}$.	
			\item [$(2)$] The fact that $L$ commutes with all the translation operators is very important to show Lemma \ref{GJD}(b). In fact, if $L$ is a linear operator on $\mathcal{H}(\C^\N)$ that does not commute  with all the translation operators and $f=f_n\circ\pi_n\in\mathcal{H}(\C^\N)$ for some $n\in\N$, then it follows from Lemma \ref{Lm1} that there exists $N\in\N$, $N\geq n$ (not necessarily equal), such that $f=f_N\circ\pi_N$ and $Lf=\pi_N^*((Lf)\circ J_N)$. Now, using \eqref{GJ} we have $Lf=(L_Nf_N)\circ\pi_N$. Hence, we may not ensure that $L$ can be factored in the form $Lf=(L_nf_n)\circ\pi_n$.
			\item [$(3)$] Lemma \ref{GJD}(b) allows us to write the orbit  Orb$_L(f)$ in terms of convolution operators on $\mathcal{H}(\C^n)$, that is, if $f=f_n\circ\pi_n\in\mathcal{H}(\C^\N)$ and $L$ is a convolution operator on $(\mathcal{H}(\C^\N),\tau)$, then
			\[Lf=(L_nf_n)\circ\pi_n, \hspace{0,3cm}L^2f=L((L_nf_n)\circ\pi_n)=L_n(L_nf_n)\circ\pi_n=(L_n^2f_n)\circ\pi_n,\]
			and proceeding by induction it follows that
			\[L^kf=(L_n^kf_n)\circ\pi_n,\]
			for every $k\in\N_0$.
		\end{enumerate}
	\end{remark}
	
	\section{Linear dynamics of convolution operators on $\mathcal{H}(\C^\N)$}
	In this section we will study the linear dynamics of convolution operators on $\mathcal{H}(\C^\N)$. We start by proving that convolution operators on $\mathcal{H}(\mathbb{C}^{\mathbb{N}})$ are neither cyclic nor $n$-supercyclic for any $n\in\N$. This result improves a result of Fávaro and Mujica \cite{vinicius2016hypercyclic}, which states that no convolution operator on $\mathcal{H}(\C^\N)$ is hypercyclic.
	\begin{theorem}\label{nsuper}
		\begin{enumerate}
			\item [(a)] No convolution operator on $\mathcal{H}(\C^\N)$ is cyclic.
			\item [(b)] No convolution operator on $\mathcal{H}(\C^\N)$ is $n$-supercyclic, for any  $n\in\N$. %choice of
		\end{enumerate}
	\end{theorem}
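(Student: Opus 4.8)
The plan is to exploit the fact, recorded in \eqref{barro}, that every entire function on $\C^\N$ depends on only finitely many variables, combined with the containment statement of Lemma \ref{Lm1}. The decisive observation is that both cyclicity and $n$-supercyclicity require a \emph{finite-dimensional} seed set $V$, and any such $V$ is automatically trapped inside a single subspace $\pi_N^*(\mathcal{H}(\C^N))$; since the orbit of $V$ under a convolution operator never leaves this subspace, and the subspace is closed and proper, density of the (span of the) orbit is impossible.

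I would prove (b) first. Let $L$ be a convolution operator on $\mathcal{H}(\C^\N)$ and suppose, for contradiction, that $V$ is an $n$-dimensional subspace with basis $v_1,\dots,v_n$ for which $\mathrm{orb}_L(V)$ is dense. By \eqref{barro} each $v_i=g_{m_i}\circ\pi_{m_i}$ for some $m_i\in\N$ and $g_{m_i}\in\mathcal{H}(\C^{m_i})$; setting $N=\max_i m_i$ and using the chain of inclusions \eqref{inclusao}, we obtain $v_1,\dots,v_n\in\pi_N^*(\mathcal{H}(\C^N))$, hence $V\subseteq\pi_N^*(\mathcal{H}(\C^N))$. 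For any $v=g_N\circ\pi_N\in V$, the ``moreover'' part of Lemma \ref{Lm1} (with $k$ arbitrary) yields $L^k v=\pi_N^*\bigl((L^k v)\circ J_N\bigr)\in\pi_N^*(\mathcal{H}(\C^N))$ for every $k\in\N_0$ — alternatively one invokes Lemma \ref{GJD}(b) and induction to write $L^k v=(L_N^k g_N)\circ\pi_N$. Therefore $\mathrm{orb}_L(V)=\bigcup_{k\geq 0}L^k(V)\subseteq\pi_N^*(\mathcal{H}(\C^N))$, and since $\pi_N^*(\mathcal{H}(\C^N))$ is a closed proper subspace of $\mathcal{H}(\C^\N)$, the orbit cannot be dense. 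This contradiction shows $L$ is not $n$-supercyclic, for any $n\in\N$.

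For (a) the argument is essentially identical. If $x$ is a cyclic vector for $L$, then $x=g_N\circ\pi_N$ for some $N\in\N$, and by the same computation $\{L^k x:k\in\N_0\}\subseteq\pi_N^*(\mathcal{H}(\C^N))$; as $\pi_N^*(\mathcal{H}(\C^N))$ is a vector subspace, the linear span of this orbit also lies in it, and being closed and proper it is not dense in $\mathcal{H}(\C^\N)$ — a contradiction. (One could instead read (a) off the implication diagram in the introduction once supercyclicity is excluded, but the direct argument is immediate and requires no nontriviality assumption: even the identity operator fails to be cyclic on this infinite-dimensional space.)

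There is no genuinely hard step: the only point deserving care is the passage from ``$V$ finite-dimensional'' to ``$V\subseteq\pi_N^*(\mathcal{H}(\C^N))$ for a single $N$'', which rests on \eqref{barro} and \eqref{inclusao}, together with the fact that convolution operators preserve each $\pi_N^*(\mathcal{H}(\C^N))$ (Lemma \ref{Lm1}). In particular, in contrast with what is needed for the Li--Yorke result, no dynamical criterion enters the proof.
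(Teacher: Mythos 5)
Your proposal is correct and takes essentially the same approach as the paper: both use \eqref{barro} and \eqref{inclusao} to place the finite-dimensional seed inside a single $\pi_N^*(\mathcal{H}(\C^N))$, then the ``moreover'' part of Lemma \ref{Lm1} (equivalently Lemma \ref{GJD}(b) with induction) to trap the whole orbit in that closed proper subspace, which kills density. One caveat on your parenthetical aside: the diagram gives supercyclicity $\Rightarrow$ cyclicity, not the converse, so excluding ($n$-)supercyclicity would \emph{not} yield (a); fortunately your direct argument for (a) does not rely on this and is exactly the paper's.
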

	
	\begin{proof}
		Let $L$ be a convolution operator on $\mathcal{H}(\C^\N)$.  
		\begin{enumerate}
			\item [(a)] Let $f\in\mathcal{H}(\C^\N)$. Then we may write  $f=f_n\circ\pi_n$ with $f_n\in\mathcal{H}(\C^n)$. By Lemma \ref{Lm1} the orbit of $f$ under $L$ is
			\[\textnormal{Orb}_L(f)=\{L^kf : k\in\N_0\}=\{\pi_{n}^*((L^kf)\circ J_n) : k\in\N_0\}\subset\pi_n^*(\mathcal{H}(\C^n)).\] 
			Since $\pi_n^*(\mathcal{H}(\C^n))$ is a closed  proper subspace of $(\mathcal{H}(\C^\N),\tau)$, we have 
			\[\overline{\textrm{span } \textnormal{Orb}_L(f)}^\tau\subset\pi_n^*(\mathcal{H}(\C^n)).\]
			Therefore $\textrm{span } \textnormal{Orb}_L(f)$  cannot be a dense subset of $(\mathcal{H}(\C^\N),\tau)$. So there is not a cyclic entire function for any convolution operator on $(\mathcal{H}(\C^\N),\tau)$. Hence $L$ is not cyclic.
			
			\item [(b)] Let $n\in\N, n>1$,  and let $V$ be an $n$-dimensional vector subspace of $\mathcal{H}(\C^\N)$ with generators $f_1,\ldots,f_n$. Then $L^k(V)$ is a vector subspace of $\mathcal{H}(\C^\N),$ generated by  $L^kf_1,\cdots,L^kf_n$, and with dimension less than or equal to $n$, for every $k\in\N_0$. If we write $f_1=f_{m_1}\circ\pi_{m_1}, \cdots,f_n=f_{m_n}\circ\pi_{m_n}$, with $f_{m_i}\in\mathcal{H}(\C^{m_i})$, for every $i=1,\cdots,n$, then it follows from Lemma \ref{Lm1} and (\ref{inclusao}) that
			\[L^k(V)\subset\pi_{m_1}^*(\mathcal{H}(\C^{m_1}))+\cdots+\pi_{m_n}^*(\mathcal{H}(\C^{m_n}))\subset \pi_{m}^*(\mathcal{H}(\C^{m})),\]
			for every  $k\in\N_0$, where $m:=\max\{m_i:  i=1,\dots, n\}$. Therefore
			\[\textnormal{orb}_L(V)=\bigcup_{k=0}^\infty L^k(V)\subset \pi_{m}^*(\mathcal{H}(\C^{m}))\]
			and so $\textnormal{orb}_L(V)$ cannot be dense in$(\mathcal{H}(\C^\N),\tau)$. Thus, no finite-dimensional subspace of $\mathcal{H}(\C^\N)$ is supercyclic for $L$. Hence $L$ is not $n$-supercyclic.	
		\end{enumerate}		
		
	\end{proof}
	
	\subsection{Li--Yorke chaos for convolution operators on $\mathcal{H}(\C^\N)$}

	The main result of this section is the following:
	
	\begin{theorem}\label{TLY}
		Every nontrivial convolution operator on $(\mathcal{H}(\C^\N),\tau)$ is Li--Yorke chaotic.
	\end{theorem}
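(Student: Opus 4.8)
The plan is to reduce the statement to the finite-variable case, where the Godefroy--Shapiro theorem applies, and then to lift a single ``bad'' vector back to $\mathcal{H}(\C^\N)$. Throughout, \emph{nontrivial} means that $L$ is not a scalar multiple of the identity on $\mathcal{H}(\C^\N)$.

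First I would localize the nontriviality. By Lemma \ref{GJD}(c), since $L$ is not a scalar multiple of the identity, there is some $n\in\N$ for which the associated convolution operator $L_n=J_n^*\circ L\circ\pi_n^*$ on $\mathcal{H}(\C^n)$ is not a scalar multiple of the identity either. By the classical Godefroy--Shapiro theorem \cite{godefroy}, such an $L_n$ is hypercyclic on the Fréchet space $\mathcal{H}(\C^n)$; fix a hypercyclic vector $g_n$ for $L_n$. Since the orbit $\{L_n^kg_n:k\in\N_0\}$ is dense in the infinite-dimensional Fréchet space $\mathcal{H}(\C^n)$, it admits a subsequence converging to $0$, yet it cannot itself converge to $0$ (a convergent sequence together with its limit is a compact, hence non-dense, subset). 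In the terminology of \cite{Li-Yorke}, $g_n$ is a semi-irregular vector for $L_n$.

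Next I would push $g_n$ up to $\mathcal{H}(\C^\N)$ by setting $g:=\pi_n^*(g_n)=g_n\circ\pi_n$. By Lemma \ref{GJD}(b) and an immediate induction (cf.\ the remark following Lemma \ref{GJD}), one has $L^kg=\pi_n^*(L_n^kg_n)$ for every $k\in\N_0$. Because $\pi_n^*\colon\mathcal{H}(\C^n)\to(\mathcal{H}(\C^\N),\tau)$ is a topological isomorphism onto its closed complemented image, with continuous inverse given by $J_n^*$ on that image, convergence to $0$ transfers both ways: $\pi_n^*(h_k)\to0$ in $(\mathcal{H}(\C^\N),\tau)$ if and only if $h_k\to0$ in $\mathcal{H}(\C^n)$. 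Consequently $\{L^kg\}$ has a subsequence converging to $0$ but does not converge to $0$; that is, $g$ is a semi-irregular vector for $L$ on $(\mathcal{H}(\C^\N),\tau)$, and in particular $g\neq0$.

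Finally I would produce the scrambled set directly: set $S:=\{\lambda g:\lambda\in\C\}$, which is uncountable since $g\neq0$. Given distinct $\lambda g,\mu g\in S$, we have $L^k\big((\lambda-\mu)g\big)=(\lambda-\mu)L^kg$ for all $k$; multiplication by the nonzero scalar $\lambda-\mu$ is a homeomorphism of $(\mathcal{H}(\C^\N),\tau)$, so this sequence does not converge to $0$, while continuity of the scalar multiplication shows it still has a subsequence converging to $0$. Hence $(\lambda g,\mu g)$ is proximal but not asymptotic, i.e.\ a Li--Yorke pair, so $S$ is an uncountable scrambled set and $L$ is Li--Yorke chaotic. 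The delicate point is exactly working with semi-irregular vectors in the \emph{non-metrizable} space $\mathcal{H}(\C^\N)$: the $\liminf$/$\limsup$ of distances used in the classical criterion of \cite{Li-Yorke} is unavailable and must be replaced by the neighborhood-of-zero formulation of ``proximal'' and ``asymptotic'' adopted in the introduction, and one must check carefully that $\pi_n^*$ is a genuine topological embedding so that these properties really are preserved in both directions under the identification $\mathcal{H}(\C^n)\cong\pi_n^*(\mathcal{H}(\C^n))$.
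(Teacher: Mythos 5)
Your proposal is correct and follows essentially the same route as the paper: localize nontriviality via Lemma \ref{GJD}(c), invoke Godefroy--Shapiro on $\mathcal{H}(\C^n)$ to get a semi-irregular vector for $L_n$, lift it through $\pi_n^*$ (using continuity of $\pi_n^*$ and $J_n^*$) to a semi-irregular vector for $L$, and conclude Li--Yorke chaos from the uncountable scrambled set $\C\cdot g$. The only difference is cosmetic: you inline the argument of Theorem \ref{TsI} and spell out why a hypercyclic vector is semi-irregular, whereas the paper cites these facts.
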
	
	
	To show this result we will prove a characterization of Li--Yorke chaos that involves the existence of semi-irregular vector. This characterization was obtained by Bernardes Jr \textit{et al} \cite{Li-Yorke} for operators on Fréchet spaces. We will generalize this fact for Hausdorff topological vector spaces.	The definition below is known for Fréchet spaces.
	
	Let $E$ be a Hausdorff topological vector space and let $T$ be an operator on $E$. A vector $x\in E$ is said to be a \emph{semi-irregular vector} for $T$ if the sequence $(T^nx)$ does not converge to zero, but it has a subsequence converging to zero. It is easy to see that  $(x,y)\in E\times E$ is a Li--Yorke pair for $T$ if and only if $x-y$ 
	is a semi-irregular vector for $T$.	
	
	As it was mentioned in \cite{Li-Yorke}, the notion of semi-irregularity makes sense only for infinite-dimensional spaces. An easy application of the Jordan form implies that there are no semi-irregular vectors for operators on finite-dimensional spaces.
	
	\begin{theorem}\label{TsI}
		Let $E$ be a Hausdorff topological vector space, and let $T$ be an operator on $E$. The following assertions are equivalent:
		\begin{enumerate}
			\item [(i)] $T$ is Li--Yorke chaotic.
			\item [(ii)] $T$ admits a Li--Yorke pair.
			\item [(iii)] $T$ admits a semi-irregular vector.
		\end{enumerate}	
	\end{theorem}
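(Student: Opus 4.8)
The plan is to prove the cycle of implications (i) $\Rightarrow$ (ii) $\Rightarrow$ (iii) $\Rightarrow$ (i), where the first two are essentially immediate and all the work is concentrated in the last one. For (i) $\Rightarrow$ (ii): if $T$ is Li--Yorke chaotic then it admits an uncountable scrambled set $S$; since $S$ is uncountable it contains at least two distinct points $x,y$, and by the definition of scrambled set the pair $(x,y)$ is a Li--Yorke pair for $T$. For (ii) $\Rightarrow$ (iii): I would simply invoke the equivalence already recorded just before the statement, namely that $(x,y)$ is a Li--Yorke pair for $T$ if and only if $x-y$ is a semi-irregular vector for $T$; thus a Li--Yorke pair $(x,y)$ immediately produces the semi-irregular vector $x-y$.

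The crux is (iii) $\Rightarrow$ (i), and the key observation is that semi-irregularity is invariant under multiplication by nonzero scalars, which lets me manufacture an uncountable scrambled set out of a single semi-irregular vector \emph{without any appeal to local convexity, metrizability, or completeness}. First I would note that a semi-irregular vector $x_0$ is necessarily nonzero, since $T^n 0 = 0 \to 0$ would violate the requirement that $(T^n x_0)$ not converge to zero. Next, fixing any scalar $\lambda \in \C \setminus \{0\}$, I would use the fact that in an arbitrary topological vector space the map $x \mapsto \lambda x$ is a homeomorphism of $E$ fixing the origin (its inverse being $x \mapsto \lambda^{-1}x$). Consequently $T^{n_k}(\lambda x_0) = \lambda\, T^{n_k} x_0 \to 0$ along the subsequence provided by the semi-irregularity of $x_0$, while $(T^n(\lambda x_0)) = (\lambda\, T^n x_0)$ cannot converge to zero, for otherwise applying the homeomorphism $x \mapsto \lambda^{-1}x$ would force $T^n x_0 \to 0$, a contradiction. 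Hence $\lambda x_0$ is again semi-irregular for every $\lambda \neq 0$.

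With this in hand I would take $S := \{\lambda x_0 : \lambda \in \C\}$. Because $x_0 \neq 0$, the map $\lambda \mapsto \lambda x_0$ is injective, so $S$ is uncountable and its distinct elements are exactly the $\lambda x_0$ for distinct $\lambda$. Given two such distinct points $\lambda x_0$ and $\mu x_0$, their difference $\lambda x_0 - \mu x_0 = (\lambda - \mu) x_0$ is a nonzero scalar multiple of $x_0$, hence semi-irregular by the previous paragraph, so $(\lambda x_0, \mu x_0)$ is a Li--Yorke pair. Therefore $S$ is an uncountable scrambled set and $T$ is Li--Yorke chaotic, closing the cycle.

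I do not expect a genuine obstacle in this argument: the only place the topology enters is the homeomorphism property of nonzero scalar multiplication, which holds in every topological vector space, and the only place linearity enters is the elementary identity $\lambda x_0 - \mu x_0 = (\lambda-\mu)x_0$. The delicate point — and the reason the Fréchet-space proof of Bernardes Jr \textit{et al.} can be streamlined in this setting — is precisely to route the entire construction through the homogeneity of the line $\C x_0$ rather than through any completeness or local convexity mechanism, so that the equivalence is established for an arbitrary Hausdorff topological vector space $E$ exactly as the statement requires.
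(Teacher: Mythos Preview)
Your proof is correct and follows essentially the same approach as the paper: for (iii) $\Rightarrow$ (i) the paper also takes $S = \operatorname{span}\{x\}$ and observes that for distinct scalars $\alpha,\lambda$ the vector $\alpha x - \lambda x$ is semi-irregular, so $S$ is an uncountable scrambled set. You have simply spelled out in more detail why nonzero scalar multiples of a semi-irregular vector remain semi-irregular, a step the paper leaves implicit.
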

	\begin{proof}Since the implications 
		(i) $\Rightarrow$ (ii) $\Rightarrow$ (iii) are immediate we just need to show that
		(iii) $\Rightarrow$ (i).  Let $x$ be a semi-irregular vector for $T$. Then for every $\alpha, \lambda\in\C$, with $\alpha\neq\lambda$, the sequence $\{T^n(\alpha x-\lambda x)\}$ does not converge to zero, but it has a subsequence converging to zero. Hence span$\{x\}$ is an uncountable scrambled set for $T$ and thus $T$ is Li--Yorke chaotic.
	\end{proof}
	
	Now we are able to prove the main result of this section.
	
	\begin{proof}[Proof of Theorem \ref{TLY}] Let $L$ be a nontrivial convolution operator on $(\mathcal{H}(\C^\N),\tau)$. We will show that $L$ has a semi-irregular entire function. By Lemma \ref{GJD}(c) there exists a nontrivial convolution operator $L_n$ associated to $L$. Since $L_n$ is a nontrivial convolution operator on $\mathcal{H}(\C^n)$, it follows from the classical result of Godefroy and Shapiro that $L_n$ is hypercyclic. In particular there exists a semi-irregular function $f_n\in\mathcal{H}(\C^n)$ for $L_n$. If we set $f=f_n\circ\pi_n\in\mathcal{H}(\C^\N)$, then $f$ is a semi-irregular function for $L$. In fact, since
		\begin{equation}\label{w}
		L^kf=(L_n^kf_n)\circ\pi_n=\pi_n^*\left(L_n^kf_n\right), \ \ \textnormal{for every} \ k\in\N_0,
		\end{equation}	
		it follows immediately from \eqref{w} that the sequence $(L^kf)_{k=0}^\infty$ has a subsequence converging to zero. On the other hand, if $L^kf\to0$ in the topology of $(\mathcal{H}(\C^\N),\tau)$, then
		\[L_n^kf_n=J_n^*\circ\pi_n^*\left(L_n^kf_n\right)=J_n^*(L^kf)\to0 \ \textnormal{in} \ \mathcal{H}(\C^n)\]	
		when $k\to\infty$, but this contradicts the fact  that $f_n$ is a semi-irregular function for $L_n$. Therefore $(L^kf)_{k=0}^\infty$ does not  converge to zero, and so $f$ is a semi-irregular function for $L$. Applying Theorem \ref{TsI} we obtain the desired.
	\end{proof}

	\bigskip
	
	\noindent Vin\'{\i}cius V. F\'{a}varo: \newline Faculdade de Matem\'atica,
	Universidade Federal de Uberl\^andia, Uberl\^andia -MG, CEP 38400-902,
	Brazil, \newline email: vvfavaro@gmail.com
	
	\bigskip
	
	\noindent Blas M. Caraballo:\newline IMECC, UNICAMP, Rua S\'{e}rgio Buarque de Holanda, 651, Campinas - SP, CEP 13083-859, Brazil, \newline email: mbcaraballo07@gmail.com 
	
\end{document}